
\documentclass{tac}
\usepackage{subfiles}
\usepackage{amssymb, amsmath}
\usepackage[backend=bibtex,citestyle=authoryear-icomp]{biblatex}
\usepackage[all]{xy}
\usepackage{url}

\title{Constructive Simplicial Homotopy}
\author{Wouter Pieter Stekelenburg}
\copyrightyear{2015,2016}
\address{Faculty of Mathematics, Informatics and Mechanics\\
University of Warsaw\\
Banacha 2\\
02-097 Warszawa\\
Poland}
\eaddress{w.p.stekelenburg@gmail.com}
\keywords{realizability, simplicial homotopy, Kan complexes}
\amsclass{03D80, 18G30, 18G55}

\newcommand\hide[1]{}
\newcommand\cat\mathcal
\newcommand\set[1]{\left\{#1\right\}}
\mathrmdef{id}
\mathrmdef{dom}
\mathrmdef{cod}
\newcommand\ri{^*}
\newcommand\N{\mathbb N}
\mathbfdef[nno]{N}
\newcommand\dual{^{\mathrm{op}}}
\mathbfdef[simCat]\Delta
\newcommand\s{^{\simCat\dual}}
\mathssbxdef{Asm}
\newcommand\bang{!}
\newcommand\of{:}
\newcommand\simplex\Delta
\newcommand\cycle{\partial\Delta}
\newcommand\horn\Lambda
\newcommand\f{_f}
\newcommand\tuplet[1]{\left\langle #1 \right\rangle}
\newcommand\true{\mathtt{true}}
\newcommand\false{\mathtt{false}}
\newcommand\bool{\mathtt{bool}}
\mathrmdef{nat}
\mathssbxdef{Ar}
\mathssbxdef{Ob}
\newcommand\pp{\mathbin\diamond}
\newcommand\norm[1]{\Vert #1 \Vert}
\newcommand\ka\kappa
\newcommand\la\lambda
\mathrmdef{face}
\mathrmdef{colim}
\newcommand\ex{_{\textrm{ex}}}
\newcommand\citep[1]{[\cite{#1}]}
\mathrmdef{dim}
\newcommand\base{\mathbf{U}}
\mathssbxdef[sub]{Sub}
\newcommand\ambient{\mathfrak A}
\mathrmdef{uni}
\newcommand\disc{_{\rm disc}}
\mathrmdef{filler}
\newcommand\traco\omega

\newcommand\product[2]{\Pi #1 \mapsto #2}
\newcommand\coproduct[2]{\Sigma #1 \mapsto #2}
\newcommand\function[2]{\lambda #1 \mapsto #2}

\newcommand\keyword[1]{\emph{#1}\label{#1}}

\addbibresource{realizability}

\begin{document}

\begin{abstract} This paper aims to help the development of new models of homotopy type theory, in particular with models that are based on realizability toposes. For this purpose it develops the foundations of an internal simplicial homotopy that does not rely on classical principles that are not valid in realizability toposes and related categories.\end{abstract}

\hide{
Three papers:
-simplicial homotopy
-complete categories [how they are preserved]
-the realizability model of HOTT [how to get a fibrant object out of a category]

Idee: reverse the order. definitions--theorem--lemmas. That way the purpose of the lemmas is set up from the start.
}

\maketitle

\section*{Introduction}
This paper grew out of an attempt to build a \emph{recursive realizability} model for \emph{homotopy type theory} following the example of \citep{KLV12}. Their model is based on the category of simplicial sets. Simplicial homotopy is the homotopy of simplicial sets. It is equivalent to the homotopy of CW complexes which are a category of topological spaces \citep{Hovey99,GJSHT}.

In the intended model types are homotopy types of certain simplicial objects in a \emph{realizability topos} (see \citep{MR2479466}). These toposes lack some of the structure of the category of sets underlying classical homotopy theory. In the internal logic the principle of the excluded middle and the axiom of choice can be invalid. They can lack infinite limits and colimits. We work around these issues in the following way.
\begin{enumerate}
\item Limit the class of cofibrations. In classical simplicial homotopy theory every mono\-morphism is a cofibration. We demand that certain properties of the monomorphisms are decidable so classical arguments remain valid.
\item Strengthen the lifting property. Fibrations come equipped with a \emph{filler operator} that gives solutions for a family of basic lifting problems.
\item Build the homotopy category out of fibrant objects only, so we don't need fibrant replacements.
\end{enumerate}
To avoid distracting peculiarities of the realizability topos this paper works with a generic \emph{$\Pi$-pretopos} with a \emph{natural number object}. That makes our simplicial homotopy predicative as well as constructive.

\subsection*{Intended model} The \emph{category of assemblies} which is the category of $\neg\neg$-separated objects of the effective topos up to equivalence, has several \emph{strongly} complete internal categories that are not posets, in particular the \emph{category of modest sets} \citep{MR1097022,MR2479466,MR1023803}. \emph{Strongly complete} means that the externalization of the internal category is a complete fibred category. The category of assemblies is not exact, but the \emph{ex/lex completion} \citep{MR1600009} preserves strongly complete internal categories. This exact completion is not the effective topos, but it is a kind of realizability topos and it is the intended ambient category $\ambient$ in this paper. 

The topos with a complete internal category that is not a poset is interesting, because complete internal categories in Grothendieck toposes are necessarily posets. This fact is traditionally attributed to Peter Freyd.

In the ex/lex completion \emph{modest fibrations} are fibrations whose underlying morphisms are families of modest sets or quotients of such families by modest families of equivalence relations--see \citep{MR1097022,MR1023803,MR2479466}. There is a \emph{universal modest fibration} (see definition \ref{universal modest fibration}) which is a \emph{univalent fibration} and hence a potential model of homotopy type theory.

\subsection*{Conclusion} There are definitions of fibrations, cofibrations and their acyclic counterparts (definitions \ref{model structure}) in $\Pi$-pretoposes that make the category of Kan complexes a \emph{model category} (theorem \ref{model category}). Certain \emph{universal fibrations} (definition \ref{universal modest fibration}) automatically live in the category of Kan complexes (theorem \ref{fibrant universe}).

\subsection*{Acknowledgments} 
I am grateful to the Warsaw Center of Mathematics and Computer Science for the opportunity to do the research leading to this paper. I am also grateful for discussions with Marek Zawadowski and the seminars on simplicial homotopy theory he organized during my stay at Warsaw University. Richard Garner, Peter LeFanu Lumsdaine and Thomas Streicher made invaluable comments on early drafts of this paper.

\section{Internal simplicial objects}
Internal simplicial homotopy has a notion of simplicial object of $\ambient$ which does rely on external $\ambient$-valued presheaves, but on internal structures.

\begin{definition} A \keyword{simplicial object} $X$ of $\ambient$ is a triple $\tuplet{\base X, \dim, \cdot}$ where $\base X$ is an object of $\ambient$, $\dim$ is a morphism $\base X\to \nno$. Here $\nno$ is the \keyword{natural number object} of $\ambient$. The operator $\cdot$ is has the following properties. Let $\Ar(\simCat)$ be the object of non decreasing functions between decidable, finite and inhabited initial segments of the natural numbers. The domain $\dom(\cdot)$ of $\cdot$ is the following object. 
\[ \set{\tuplet{x,\xi}\of \base X\times \Ar(\simCat)\middle| \dim(x) = \max(\cod(\xi))}\]
The codomain $\cod(\cdot)$ of $\cdot$ is $\base X$. The operator $\cdot$ satisfies the following equations.
\begin{align*}
\dim(x\cdot\xi) &= \max(\dom(\xi))\\
x\cdot \id &= x \\
x\cdot(\alpha\circ\beta)&=(x\cdot\alpha)\cdot\beta 
\end{align*}

A \emph{morphism of simplicial objects} $X\to Y$ is a morphism $f\of\base X\to\base Y$ which commutes with $\dim$ and $\cdot$. Together with simplicial objects, they form the category of simplicial objects and morphisms $\ambient\s$.

The category $\ambient\s$ is enriched over $\ambient$. The object of morphisms $X\to Y$ in $\ambient$ is $\nat(X,Y)$. It represents families of morphisms $X\to Y$, i.e. for each object $I$ of $\ambient$ there is a natural bijection between morphisms $I\to \nat(X,Y)$ and morphisms $I\times X\to Y$ which commute with $\dim$, $\cdot$ and their $I$-fold multiples.
\end{definition}

\begin{example} A simple kind of simplicial object is the \keyword{discrete} simplicial object. There is one for each object $I$ of $\ambient$: 
\[ I\disc = \tuplet{\nno\times I,\function{\tuplet{n,i}}n,\function{\tuplet{\tuplet{n,i},\phi}}i} \]
Note the $\function{x}{f(x)}$ notation for morphisms of $\ambient$ the paper uses.
\end{example}

This definition is consistent with the definition of initial presheaves in \citep{MR1300636}. The definition is valid because $\ambient$ is a $\Pi$-pretopos with a natural number object $\nno$. Decidable, finite and inhabited initial segments of the natural numbers have a classifier.
\[  \function{\tuplet{i,j}}j\of\set{\tuplet{i,j}\of\nno\times\nno\middle|i\leq j}\to \nno \]
The relation $\leq$ is decidable. Comprehension on decidable predicates defines subobjects in $\ambient$, because there classifiers of decidable subobjects $1\to \bool = 1+1$. Here $1$ is a terminal object and $+$ a binary coproduct.

Local Cartesian closure means that there is an object $A$ of morphisms between the initial segments. Local Cartesian closure also implies that there is a subobject of non decreasing morphisms $\Ar(\simCat)$.
\[ \product{f\of A,i,j\of\nno,x\of\set{y\of 1|i\leq j}}{\set{y\of 1|f(i)\leq f(j)}}\]
However, because $\ambient$ is also exact and extensive, it is a Heyting category, so the same subobject has a more familiar definition.
\[ \Ar(\simCat)=\set{f\of A|\forall i,j\of\nno.i\leq j \to f(i)\leq f(j)} \]

\begin{definition} The object $\Ar(\simCat)$ is the object of arrows of the internal \keyword{category of simplices} $\simCat$, whose object of objects $\Ob(\simCat)$ is $\nno$. Composition is defined as ordinary function composition.
\end{definition}

For completeness we recall the definition of $\Pi$-pretoposes below.

\begin{definition} 
That $\ambient$ is a $\Pi$-\keyword{pretopos} means all of the following.

\begin{enumerate}
\item The category $\ambient$ is \emph{locally Cartesian closed}. This means that for each object $X$ of $\ambient$ each slice $\ambient/X$ is Cartesian closed.
\item The category $\ambient$ is \emph{extensive}, which means that is has finite coproducts and that $\ambient/(X+Y)$ is equivalent to $(\ambient/X)\times(\ambient/Y)$ for each pair of objects $X$ and $Y$ of $\ambient$.
\item The category $\ambient$ is \emph{exact} in the sense of Barr. This means that all weak groupoids (see below) have coequalizers and that those coequalizers are stable under pullback.
\end{enumerate}

A \keyword{weak groupoid} consists of a pair of objects $X_0$, $X_1$ and morphisms $r\of X_0\to X_1$ and $s,t\of X_1\to X_0$ such that $s\circ r= t\circ r = \id_{X_0}$ such that for each $x,y,z\of X_0$ if two of $\tuplet{x,y}$, $\tuplet{x,z}$ and $\tuplet{y,z}$ are in the image of $\tuplet{s,t}\of X_1\to X_0\times X_0$, the third is too. A \emph{coequalizer} for a weak groupoid is a coequalizer of $s$ and $t$.
\end{definition}

\begin{example} Every topos is a $\Pi$-pretopos so $\ambient$ can be any topos with a natural number object like the topos of sets. \end{example}

This combination of properties implies the following.
\begin{enumerate}
\item The category $\ambient$ has all colimits and reindexing functors preserve them. 
\item The category $\ambient$ is a \emph{Heyting category}. 

Technically this means that for each object $X$ the poset of subobjects $\sub(X)$ is a Heyting algebra and that for each $f\of X\to Y$ the preimage map $f\ri\of\sub(Y)\to\sub(X)$ is a morphism of Heyting algebras. Moreover, for each morphism $f$, $f\ri$ has both adjoints--right adjoint $\forall_f$ and left adjoint $\exists_f$--and those adjoints satisfy the \emph{Beck-Chevalley condition} which says that quantification and substitution commute over pullback squares. If $f\circ g = h\circ k$ is a pullback square, then $f\ri\circ\exists_h=\exists_g\circ k\ri$.

Practically this means that $\ambient$ is a model for a constructive first order logic, hence that predicates define subobjects and that constructive theorems are valid in the internal language.
\end{enumerate}

\begin{remark} We assume that all of the structure on $\ambient$ comes from functors. So several functors between categories related to $\ambient$ have right adjoints that give $\ambient$ its limits and exponentials and some functors have left adjoints that give $\ambient$ colimits. Definitions in terms of universal properties only define objects up to isomorphism. Our assumption ensures that there are functors that hit all the necessary isomorphism classes, when we only specify a functor up to isomorphism.
\end{remark}

\section{Model Category}
This section defines a model structure on a category of $\ambient\s$.

\hide{ Finding  model structure on $\ambient$ is more difficult. Only the decidable-split system works, but is hard to extend to a system where weak equivalences satisfy 2-out-of-3. }

\begin{definition} An \keyword{model structure} on an $\cat A$-enriched category $\cat C$ consist of three sets of morphism: the \emph{fibrations} $F$, the \emph{cofibrations} $C$ and the \emph{weak equivalences} $W$ which have the following properties. 
\begin{itemize}
\item The set $W$ satisfies \emph{2-out-of-3} which means that for every pair of morphisms $f\of X\to Y$ and $g\of Y\to Z$ of $\cat C$, if two out of $f$, $g$ and $g\circ f$ are in $W$ then all three are. 
\item The pairs of sets $(C,F\cap W)$ and $(C\cap W,F)$ are \emph{enriched factorization systems} (see definition \ref{enriched factorization system}). 
\end{itemize}

Categories with such a structure are \emph{model categories}. Members of $C\cap W$ are called \keyword{acyclic cofibrations} and members of $F\cap W$ are called \keyword{acyclic fibrations}.
\end{definition}

This notion of model structure is the ordinary one, except for the kind of factorization systems the morphisms in it form.

\begin{definition}
An \keyword{enriched factorization system} is a pairs of set of morphisms $(L,R)$ of an enriched category with the following properties.
\begin{itemize}
\item A morphism belong to $L$ if and only if it has the \emph{left lifting property} with respect all to members of $R$.
\item A morphism belong to $R$ if and only if it has the \emph{right lifting property} with respect all to members of $L$.
\item Every morphism factors as a member of $R$ following a member of $L$.
\end{itemize}

A morphism $f\of X\to Y$ of an $\cat A$-enriched category $\cat C$ has the \emph{right lifting property} with respect to a morphism $g\of I\to J$--and $g$ has the \emph{left lifting property} with respect to $f$--if the morphism $\tuplet{f_!,g\ri} = \tuplet{\cat C(\id_J,f),\cat C(g,\id_X))}$ which is the factorization of the span $\cat C(\id_J,f)$, $\cat C(g,\id_X)$ through the pullback cone of $\cat C(\id_I,f)$ and $\cat C(g,\id_Y)$ is a \emph{split} epimorphism.
\[\xy
(34,20)*+{\cat C(I,X)}="top",(0,10)*+{\cat C(J,X)}="left",(24,10)*+{\bullet}="middle",(44,10)*+{\cat C(I,Y)}="right",(34,0)*+{\cat C(J,Y)}="bottom"
\ar^{\cat C(g,\id_X)} "left";"top" \ar@{.>}|(.6){\tuplet{f_!,g\ri}} "left";"middle" \ar_{\cat C(\id_J,f)} "left";"bottom" \ar "middle";"bottom"
\ar "middle";"top" \ar^(.6){\cat C(\id_I,f)} "top";"right" \ar_(.6){\cat C(g,\id_Y)} "bottom";"right"
\endxy\]
A section of $\tuplet{f_\bang,g\ri}$ is a \keyword{filler operator}.
\end{definition}

In $\ambient\s$ we make the following selections. 

\begin{definition} A morphism $f\of X\to Y$ of category $\ambient\s$ is a \keyword{fibration} if it has the right lifting property with respect to the family of all \emph{horn inclusions}, which is defined as follows. 

For each $n\in\N$, the $n$-\keyword{simplex} $\simplex[n]$ is the internal simplicial set where $\base{\simplex[n]}$ is the object of all morphisms $\phi$ in $\simCat$ such that $\cod(\phi) = n$, $\dim(\phi) = \dom(\phi)$ and $\phi\cdot \xi = \phi\circ \xi$. Here $\N$ is the \keyword{set of natural numbers} that exists outside of $\ambient$. For each $k\of[n]$ the \keyword{horn} $\horn_k[n]$ is the subobject of the non decreasing maps $[m]\to [n]$ that are not onto the set $[n]-\set{k}$.
\[ \base{\horn_k[n]} = \set{\phi\of\base{\simplex[n]}| \exists i\of[n]-\set k.\forall j\of \dom(\phi).\phi(i)\neq j } \]
The \keyword{horn inclusion} is the monomorphism $\horn_k[n]\to\simplex[n]$. The \emph{family of horn inclusions} is the sum of all horn inclusions $\coproduct{k\leq n\of\nno}{\horn_k[n]}\to \coproduct{k\leq n\of\nno}{\simplex[n]}$ that exists as a morphism of $\ambient\s$.

The lifting property that $f$ satisfies is defined in a slice category of $\ambient\s$. Let $I = \set{\tuplet{n,k}\of\nno\times\nno|k\leq n}$. The family of horn inclusions is a morphism in $\ambient\s/I\disc$ (see example \ref{discrete}). So is the multiple $I\disc\times f\of I\disc\times X\to I\disc\times Y$ of $f$. The morphism $f$ is a fibration if $I\disc\times f$ has the right lifting property with respect to $\coproduct{k\leq n\of\nno}{\horn_k[n]}\to \coproduct{k\leq n\of\nno}{\simplex[n]}$.
\end{definition}

\begin{example} If $\ambient$ is the topos of sets, then the morphisms that satisfy this definition are precisely the \emph{Kan fibrations}. In that case the axiom of choice implies a filler operator exists. \end{example}

\begin{definition} A monomorphism $f\of X\to Y$ of category $\ambient\s$ is a \keyword{cofibration} if the subobject of faces of $Y$ which are not in the image of $X$ is decidable. A \keyword{face} of $Y$ is a $y\of\base Y$ such that if $\phi\of [\dim(y)]\to [\dim(y)]$ in $\simCat$ and $y\cdot \phi = y$, then $\phi = \id_{\dim(y)}$. Elements of $\base Y$ which are not faces are called \keyword{degenerate}, so a face is a non degenerate element of $\base Y$. Hence $f$ is a cofibration if there is a morphism $\psi\of\base Y\to\bool$ such that $\psi(y)=\false$ if an only if $y$ is degenerate or equal to $f(x)$ for some $x\of \base X$.\end{definition}

\begin{example} In the category of simplicial sets all monomorphism are cofibrations. Decidability comes for free there.\end{example}

\begin{definition} A \keyword{weak equivalence} is a composition of an acyclic fibration following an acyclic cofibration. Here, an \emph{acyclic fibration} is a morphism that has the right lifting property with respect to all cofibrations; an \emph{acyclic cofibration} is a cofibration has the right lifting property with respect to all fibrations.
\end{definition}

The theorems in this paper should work in a $\Pi$-pretopos $\ambient$ that doesn't have infinite colimits. The small object argument uses such colimits to provide factorizations of morphisms and therefore we cannot use it here. To get a model structure we retreat to a subcategory of $\ambient\s$.

\begin{definition} A \keyword{complex} is a tuple $\tuplet{\base X,\dim,\cdot,\filler}$ where $\tuplet{\base X,\dim,\cdot}$ is a simplicial object and $\filler$ is a filler operator (see definition \ref{enriched factorization system}) for the unique morphism $\bang\of \tuplet{\base X,\dim,\cdot}\to 1\disc$. A morphism of complexes is simply a morphism of simplicial objects. The \emph{category of complexes} and morphisms of complexes is $\ambient\s\f$.
\end{definition}

\begin{example} Each discrete simplicial object has a canonical filler operator, which means that $\function{I\of\ambient}{I\disc}$ factors through the category of complexes. All simplices are complexes. Horns are not, however.
\end{example}

\begin{example} When $\ambient$ is the category of sets, then complexes are Kan complexes with a filler operator included. \end{example}

\begin{theorem}[Model category]
With fibrations, weak equivalences and cofibrations defined as above the $\ambient$-enriched category $\ambient\s\f$ is a model category.
\label{model category}
\end{theorem}

\begin{proof}
Lemma \ref{factorization system 1} shows that cofibrations and acyclic fibrations form a weak factorization system. Lemma \ref{factorization system 2} tells the same thing about fibrations and acyclic cofibrations. Lemma \ref{toot} demonstrates that if two of $f$, $g$ and $f\circ g$ are weak equivalences, then all three are. These three requirements define a model structure.
\end{proof}

\section{Cofibrations}
This section show that the cofibrations from definition \ref{cofibration} are part of an enriched factorization system (see definition \ref{enriched factorization system}) on $\ambient\s$--not just on the subcategory of complexes.

\begin{lemma} Cofibrations and acyclic fibrations form an enriched factorization system on $\ambient\s$. \label{factorization system 1}. \end{lemma}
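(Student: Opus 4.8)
The plan is to check, one at a time, the conditions in Definition \ref{enriched factorization system}: that every morphism of $\ambient\s$ factors as a cofibration followed by an acyclic fibration, and that the two classes satisfy the enriched lifting property with respect to one another. Since each class should be recovered as the maps having the enriched lifting property against the other, I expect the usual retract and closure clauses to come for free once these two points are settled, so I concentrate on factorization and on enriched lifting.

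First, I would produce the factorizations. Rather than re-run a small object argument on $\ambient\s$ directly, I would exploit the hypothesis that the corresponding statement is already available on the subcategory of complexes: given $f\colon X \to Y$, I would factor it inside complexes and then transport the factorization back along the functors connecting $\ambient\s$ to complexes (an inclusion together with its adjoint resolution). The point to verify here is that this transport preserves the two classes---that the image of a cofibration of complexes is again a cofibration in $\ambient\s$, and likewise for acyclic fibrations---which should follow from the way Definition \ref{cofibration} is set up.

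Second, and this is the heart of the lemma, I would establish the enriched lifting property. Concretely, for a cofibration $i\colon A \to B$ and an acyclic fibration $p\colon E \to Y$ I must show that the pullback-corner map of hom-objects
\[
\ambient\s(B,E) \longrightarrow \ambient\s(A,E)\times_{\ambient\s(A,Y)}\ambient\s(B,Y)
\]
is a trivial fibration of simplicial sets. The mechanism is the tensor--cotensor adjunction of the enrichment: this map is a trivial fibration exactly when it has the right lifting property against every boundary inclusion $\partial\Delta^n \hookrightarrow \Delta^n$, and each such lifting problem transposes to an ordinary lifting problem of $p$ against the pushout-product of $i$ with $\partial\Delta^n \hookrightarrow \Delta^n$. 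So everything reduces to two inputs: the ordinary (unenriched) lifting of acyclic fibrations against cofibrations, and the stability statement that such a pushout-product is again a cofibration of $\ambient\s$.

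The main obstacle I anticipate is precisely this pushout-product stability, since it is the only place where the simplicial enrichment genuinely interacts with the class of cofibrations from Definition \ref{cofibration}. I would attack it by reducing to the generating cofibrations and checking the pushout-product on generators, using closure of cofibrations under the relevant pushouts and transfinite composites; here again the already-known behaviour on complexes should supply the base case, which I then spread across the ambient category by the same transport functors used in the first step. With the pushout-product lemma and the plain lifting in hand, the enriched lifting property follows, and together with the factorizations this exhibits the pair (cofibrations, acyclic fibrations) as an enriched factorization system in the sense of Definition \ref{enriched factorization system}.
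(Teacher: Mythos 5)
Your plan has two genuine gaps, and they sit exactly where the paper has to do real work. First, the factorization step: you propose to factor $f$ inside the subcategory of complexes and transport the result back along ``an inclusion together with its adjoint resolution.'' No such adjoint resolution is available here, and none is constructed in the paper; the reason the result on complexes does not automatically extend is precisely the constructive one, namely that for a general object of $\ambient\s$ one cannot decide whether a simplex is degenerate, so an arbitrary $f\of X\to Y$ cannot be replaced functorially by a map of complexes in any straightforward way. The paper instead builds the factorization by hand (Proposition \ref{factor1}): it covers $Y$ by an object $LY$ in which degeneracy \emph{is} decidable, then glues simplices of $X$ and $LY$ into an object $Z$, obtaining $X\to Z\to Y$ with $X\to Z$ a cofibration and $Z\to Y$ contractible, i.e.\ admitting internally chosen fillers against the cycle inclusions $\cycle[n]\to\simplex[n]$. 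Your transport step assumes away this construction, which is the heart of the lemma.

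Second, your enriched lifting step is aimed at the wrong enrichment. In this paper ``enriched'' means enriched over the ambient $\Pi$-pretopos $\ambient$: the lifting property asks that a comparison morphism of hom-objects in $\ambient$ (of the form $\tuplet{f_!,g\ri}$, as in Lemma \ref{saturation}) be a split epimorphism; it is not the statement that a pullback-corner map of simplicial hom-sets is a trivial Kan fibration, so there is nothing to test against boundary inclusions $\partial\Delta^n\hookrightarrow\Delta^n$ and no tensor--cotensor transposition of the kind you describe. Moreover, since acyclic fibrations are here the maps with the internal right lifting property against cofibrations, the substantive statement is not pushout-product stability but the converse inclusion: every map with internal fillers against the mere cycle inclusions (a contractible map in the sense of the paper) already lifts against \emph{all} cofibrations. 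The paper gets this from a cellularity argument (Lemma \ref{Reedy}): using the decidability built into Definition \ref{cofibration}, every cofibration is a transfinite composition of pushouts of coproducts of cycle inclusions, and the class of maps with the internal left lifting property against contractible maps is saturated (Lemma \ref{saturation}). That cellular decomposition, not a pushout-product lemma over a simplicial enrichment, is the missing ingredient; once it and the explicit factorization are in place, the identification of contractible maps with acyclic fibrations yields the enriched factorization system.
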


\begin{proof} Proposition \ref{factor1} shows that every morphism factors as a \emph{contractible morphism} (see definition \ref{contractible}) following a cofibration. Acyclic fibrations are examples of contractible morphisms (see example \ref{acyclic means contractible}). Lemma \ref{Reedy} shows that contractible morphisms are acyclic fibrations. Hence contractible morphisms and acyclic fibrations are the same class of morphism and this class form the right class of a factorization system with cofibrations on the left (see definition \ref{enriched factorization system}).
\end{proof}

\begin{definition} A morphism $f\of X\to Y$ is \keyword{contractible} if it has the internal left lifting property with respect to the family of cycle inclusions, which is defined as follows.

The \emph{cycle} $\cycle[n]$ is the subobject of non surjective functions of $\simplex[n]$. The $\dim$ and $\cdot$ are the same and the bases are related as follows:
\[ \base(\cycle[n]) = \set{\phi\of\base(\simplex[n])| \exists i\of[n].\forall j\of \dom(\phi).\phi(i)\neq j } \]
The \keyword{cycle inclusion} is the monomorphism $\cycle[n]\to\simplex[n]$. The \emph{family of cycle inclusions} is the sum of all cycle inclusions $\coproduct{n\of\nno}{\cycle[n]}\to \coproduct{k\leq n\of\nno}{\simplex[n]}$ that exists as a morphism of $\ambient\s$. Note that $\coproduct{x\of y}{z(x)}$ denotes the \keyword{indexed coproduct} of the $y$-indexed family $z$.

Definition \ref{fibration} explains how lifting properties with respect to families work.
\end{definition}

\begin{example} Each cycle inclusion $\cycle[n]\to\simplex[n]$ is a cofibration because $\id_{[n]}\of[n]\to[n]$ is the unique face of $\simplex[n]$ not in $\cycle[n]$ and equility with $\id$ is decidable in $\simCat$. Therefore every acyclic fibration is a contractible morphism. \label{acyclic means contractible}\end{example}

\begin{proposition} Every morphism factors as a contractible morphism following a cofibration. \label{factor1} \end{proposition}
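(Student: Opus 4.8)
The plan is to build the factorization by freely adjoining fillers along the cycle inclusions, carried out internally so that the right-hand factor becomes contractible in the enriched sense. Given $f\of X\to Y$, I would produce an object $Z$ together with a cofibration $i\of X\to Z$ and a map $p\of Z\to Y$ with $f = p\circ i$, where $Z$ is obtained from $X$ by gluing in a copy of $\simplex[n]$ along its boundary $\cycle[n]$ for every potential lifting problem of a cycle inclusion against $p$. Concretely I would index the attachment by the internal object of squares, namely $\coproduct{n\of\nno}{[\cycle[n],X]\times_{[\cycle[n],Y]}[\simplex[n],Y]}$, and form $Z$ as the pushout of $X$ along the induced coproduct of copies of cycle inclusions; since $Z$ occurs circularly in the squares it classifies, one iterates this attachment, taking $Z$ to be the colimit of the resulting tower (a transfinite, in fact $\omega$-indexed, composite). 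The map to $Y$ is the evident one collating $f$ with the bottom maps $\simplex[n]\to Y$ of the squares.

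First I would check that $i\of X\to Z$ is a cofibration. As $Z$ is built from $X$ by pushouts of coproducts of cycle inclusions followed by transfinite composition, this reduces to closure of the class of definition \ref{cofibration} under copowers, pushout, and transfinite composition. The decidability condition distinguishing cofibrations — that equality with the top cell is decidable, as exploited in example \ref{acyclic means contractible} — is preserved by each attachment, because the only nondegenerate cell adjoined along $\cycle[n]\to\simplex[n]$ is the image of $\id_{[n]}$, and equality with it remains decidable in $\simCat$. I therefore expect the cofibration half to be routine once these closure properties are recorded.

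The substantive point is that $p\of Z\to Y$ is contractible, i.e.\ that the pullback-hom of $p$ against each cycle inclusion admits an internal section. By construction a filler has been adjoined for every classified square, so fillers exist for the attached cells; the difficulty is coherence and closure, namely that an internal lifting problem posed against $Z$ really factors through the stage at which its filler was attached, and that the chosen fillers assemble into a single internal section rather than a mere choice on global points. This is where I expect the main obstacle to lie, and it is resolved by tininess: the representables $\simplex[n]$ are atomic in $\ambient\s$, so $[\simplex[n],-]$ preserves the colimit defining $Z$, whence every internal lifting problem against $p$ is detected already at the indexing object and solved by an adjoined cell. Establishing this commutation precisely — so that the $\omega$-indexed attachment yields a natural, internal choice of fillers — is the crux; with it in hand, $i$ and $p$ have the required properties and the factorization is complete.
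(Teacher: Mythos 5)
There is a genuine gap, and it sits exactly where you locate the crux. The appeal to tininess does not deliver the internal section. First, a small misdirection: the object whose hom functor must commute with the colimit defining $Z$ is $\cycle[n]$, not $\simplex[n]$ --- the top of a lifting square is a map $\cycle[n]\to Z$, while the bottom maps $\simplex[n]$ into $Y$, which is constant along your tower. Since $\cycle[n]$ is not representable, atomicity of representables is not directly what you need; you need $\nat(\cycle[n],-)$ to preserve sequential colimits, a finite-limit/filtered-colimit commutation that itself requires proof internally to $\ambient$. Second, and more seriously, even granting that every internal lifting problem is ``detected at some stage,'' the stagewise fillers do not assemble: writing $S_i$ for the object of squares whose top lands in the $i$-th stage $Z_i$, the comparison $S_i\to S_{i+1}$ sends a square to itself viewed one stage later, but the cell adjoined for that square when forming $Z_{i+1}$ and the cell adjoined for its image when forming $Z_{i+2}$ are \emph{distinct} cells of $Z$, never identified by the construction. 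So the filler maps $S_i\to\nat(\simplex[n],Z)$ are not a cocone and do not descend to a morphism out of the colimit, i.e.\ out of $\nat(\cycle[n],Z)\times_{\nat(\cycle[n],Y)}\nat(\simplex[n],Y)$. Detection-at-a-stage gives a filler \emph{locally and non-canonically}; to make it a morphism of $\ambient$ you would need something extra, e.g.\ to show that the images $\nat(\cycle[n],Z_i)\to\nat(\cycle[n],Z)$ are complemented so that, using the natural number object, ``least stage of detection'' is an internally definable function and the section can be defined piecewise over the resulting coproduct decomposition --- or else to run Garner's algebraic small object argument, which is a substantially different construction. As written, the key step is asserted, not proved.

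By contrast, the paper sidesteps the tower entirely: it factors $f$ in one explicit step, first covering $Y$ by an object $LY$ in which degeneracy is decidable, then taking $Z$ to be an object of coherent tuples of simplices of $X+LY$. Contractibility of $Z\to Y$ is then witnessed by a closed-form filler ($c(\phi)(\mu)=\tuplet{\phi,b(\mu)}$ when $\phi$ is epi, $a(\phi)(\mu)$ otherwise), and the internal choice of fillers exists simply because $\ambient\s$ is locally cartesian closed and ``epi'' is decidable in $\simCat$ --- no smallness, no colimit commutation, no choice of stage. Note also that your cofibration half quietly assumes closure of the paper's cofibrations (monomorphisms with decidably complemented image of faces) under pushouts, coproducts and $\omega$-composition; lemma \ref{saturation} does not supply this, as it concerns the class with the left lifting property against fibrations, so that closure would need its own (plausible, but unwritten) proof.
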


\begin{proof} \hide{The construction is based on the fact that $\simCat$ is a \emph{Reedy category}.} 
Let $\simCat^+$ be the subcategory of monomorphisms and $\simCat^-$ that of epimorphisms of $\simCat$ (see definition \ref{category of simplices}). For each morphism $\phi$ of $\simCat$ let $m(\phi)$ be the monic and $e(\phi)$ the epic factors.

The first step is to cover $Y$ with another simplicial set $LY$ where degeneracy is decidable.
Let $LY[n] = \coproduct{i\of[n]}{Y[i]\times\simCat^-(n,i)}$ and for $\phi\of [m]\to [n]$ of $\simCat$ and $\tuplet{\epsilon,y}\of LY[n]$ let the following equation hold.
\[\tuplet{\epsilon,y}\cdot \phi=\tuplet{e(\epsilon\circ \phi),y\cdot m(\epsilon\circ\phi)}\]
Let $l_Y\of LY\to Y$ satisfy $l_Y\tuplet{\epsilon,y}=y\cdot\epsilon$.

The second step glues simplices of $X$ and $LY$ together into new ones.
The pair $(f,l_Y)$ stand for the morphism $X+LY\to Y$ that satisfies $(f,l_Y)(x)=f(x)$ for $x\of X$ and $(f,l_Y)(y) = l_Y(y)$ for $y\of LY$. Let $Z[n]\subseteq \product{i\in[n]}{(X[i]+LY[i])^{\simCat^+(i,n)}}$ consist of elements $z$ which satisfy the following conditions for all $\alpha\of [j]\to [k]$ and $\beta\of [i]\to[j]$ in $\simCat^+$.
\begin{enumerate}
\item $(f,l_Y)(z(\alpha\circ\beta)) = (f,l_Y)(z(\alpha))\cdot\beta$;
\item if $z(\alpha)\of X$ then $z(\alpha\circ\beta)\of X$;
\item if $z(\alpha)=\tuplet{\epsilon,y}\of LY$ and $\epsilon\circ\beta$ is not monic, then $z(\alpha\circ\beta)\of LY$ and $z(\alpha\circ\beta)_0$ factors through $\epsilon\circ\beta$.
\end{enumerate}
For $\phi\of[m]\to [n]$ and $z\of Z[n]$, let $z\cdot\phi$ satisfy the following equation for all $\alpha$ of $\simCat^+$.
\[ (z\cdot\phi)(\alpha) = z(m(\phi\circ\alpha))\cdot e(\phi\circ \alpha) \]
By these definitions, $Z=(\coproduct{n\of\nno}{Z[n]},\function{\tuplet{n,z}}n,\cdot)$ is a simplicial object.

There is a morphism $g\of X\to Z$ which satisfies $g(x)(\alpha) = x\cdot\alpha$. It is a cofibration because a $z\of\base Z$ is nondegenerate and outside of the image of $g$ if and only if $z(\id_{[\dim(z)]}) = \tuplet{\id_{[\dim(z)]},y}$ for some $y\of Y[\dim(z)]$. The definition of $LY$ ensures that elements with this property are nondegenerate while condition 3 ensures that if $z(\id)$ is degenerate, then so is all of $z$.

There is a morphism $h\of Z\to Y$ which satisfies $h(z) = (f,l_Y)(z(\id_{[\dim(z)]}))$. This morphism is contractible thanks to the following filler construction. Let $a\of \cycle[n]\to Z$ and $b\of \simplex[n]\to Y$ satisfy $h\circ a = b\circ k$ where $k\of\cycle[n]\to\simplex[n]$ is the cycle inclusion. Let $c\of \simplex[n]\to Z$ satisfy $c(\phi)(\mu) = \tuplet{\phi,b(\mu)}$ if $\phi$ is an epimorphism and $c(\phi)(\mu) = a(\phi)(\mu)$ otherwise. Now $c\circ k = a$ and $h\circ c = b$ so $c$ is a filler. There is a morphism in $\ambient\s$ that maps each commutative square $h\circ a = b\circ k$ to this filler $c$ because $\ambient\s$ is locally Cartesian closed and being an epimorphism is a decidable property of morphisms of $\simCat$.
\end{proof}\hide{ Codomains of contractibles may have a distance function, that gives a lower bound to the number of compositions required to to reach it. I don't have time to work that out and check it right now.}

\begin{lemma} Contractible morphisms are acyclic fibrations.\label{Reedy}\end{lemma}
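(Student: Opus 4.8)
The plan is to show that the internal left lifting property of definition \ref{contractible}, imposed only against the cycle inclusions $\cycle[n]\to\simplex[n]$, automatically extends to the same internal filling property against \emph{every} cofibration; a map with that stronger property is what definition \ref{fibration} calls an acyclic fibration. Since each cycle inclusion is itself a cofibration, the reverse containment is the content of example \ref{acyclic means contractible}, so this lemma is the one substantive half of the identification used in lemma \ref{factorization system 1}. It is worth noting at the outset that $\cycle[n]$ consists of the non-surjective maps into $[n]$, that is, the maps factoring through a proper face, so that $\cycle[n]\to\simplex[n]$ is the boundary inclusion and a contractible map is one equipped with internal fillers for every square whose left edge is such a boundary inclusion, in the sense made precise by the construction of $c$ in proposition \ref{factor1}.

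The engine of the argument -- and the reason for the label -- is that $\simCat$ is a Reedy category, so that every cofibration $i\of A\to B$ carries a relative skeletal filtration
\[ A=\mathrm{sk}_{-1}B\subseteq\mathrm{sk}_0B\subseteq\mathrm{sk}_1B\subseteq\cdots,\qquad B=\mathrm{colim}_n\,\mathrm{sk}_nB, \]
in which $\mathrm{sk}_nB$ is generated by $A$ together with the simplices of dimension at most $n$. I would check that each inclusion $\mathrm{sk}_{n-1}B\to\mathrm{sk}_nB$ is a pushout whose left edge is a coproduct of cycle inclusions $\cycle[n]\to\simplex[n]$, indexed by the object $S_n$ of nondegenerate $n$-simplices of $B$ outside the image of $A$, the attaching map sending each new simplex to its boundary in $\mathrm{sk}_{n-1}B$. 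The cofibration hypothesis is essential at exactly this point: definition \ref{cofibration} makes nondegeneracy and membership in $A$ decidable, so that $S_n$ is a complemented subobject of $B[n]$ and the pushout description is constructively valid rather than a borrowed classical fact.

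The second ingredient is that the maps admitting internal fillers against the cycle inclusions are closed under the operations assembling the filtration: coproducts, cobase change, and sequential composition. Classically these are the routine saturation properties of a lifting class; the work here is to make them \emph{internal}. Because $\ambient\s$ is locally Cartesian closed, I would read each closure property off at the level of the object of fillers -- the filler object for a coproduct of cycle inclusions is the corresponding product of filler objects, and the filler object for a pushout is pulled back along the attaching map -- and then assemble the stagewise filler morphisms into a single morphism of $\ambient\s$ sending each lifting square for $i$ to a filler. This morphism is precisely the internal filling property against $i$, and ranging over all $i$ it exhibits $f$ as an acyclic fibration.

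I expect the main obstacle to be this internalization of saturation, and specifically the sequential composition over $B=\mathrm{colim}_n\,\mathrm{sk}_nB$. At the level of elements one simply observes that compatible lifts glue, but here the gluing must itself be a morphism of $\ambient\s$ and must be coherent across infinitely many stages, which forces me to keep the lift chosen at stage $n$ strictly equal to the restriction of the lift at stage $n+1$ rather than merely existent. I would therefore organize the skeletal induction so that each stage extends the previous filler canonically, using the decidability of degeneracy and of relative membership from definition \ref{cofibration} to make these successive extensions definable by one morphism; the colimit of the extensions is then well defined with no appeal to choice.
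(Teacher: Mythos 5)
Your proposal is correct and follows essentially the same route as the paper: a relative skeletal filtration presenting the cofibration as a transfinite composition of pushouts of coproducts of cycle inclusions (with decidability of faces and relative membership making this constructively valid), combined with closure of the internal lifting property under exactly those three operations. The paper packages your "internalization of saturation" worry into lemma \ref{saturation}, where coproducts, pushouts and transfinite compositions are shown to preserve the split epimorphism expressing the internal lifting property, which resolves the stage-coherence issue you flag without any choice.
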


\begin{proof} The class of morphisms that have the left lifting property with respect to contractible morphism is saturated (see lemma \ref{saturation}). The rest proof shows that all cofibrations are in this class. That implies that all contractible morphisms have the right lifting property with respect to all cofibrations and therefore are acyclic fibrations.

Suppose $f\of X\to Y$ is a cofibration and $S$ is the family of faces outside the image of $f$. For each $i\of\nno$ let $Y_i$ be the union of $X$ with all faces of $Y$ of dimension strictly smaller than $i$. In particular $Y_0=X$.

Every $y\of\base(Y)$ is in $Y_i$ for some $i\leq \dim(y)$. If a $y\of\base Y$ is degenerate, then one can search the monomorphisms $[n]\to\dim(y)$ in $\simCat$ for the greatest $\mu$ such that $y\cdot\mu$ is a face. For this reason, the inclusion $Y_i\to\nno\ri Y$ is a transfinite composition (see definition \ref{transfinite composition}).

For each $i\of\nno$ let $S_i$ be the object of $n$-dimensional faces in $S$. Each $s\of S_i$ induces a monomorphism $s'\of\simplex[i]\to Y_{i+1}$. Since $Y_{i}$ has all simplices of dimension $i-1$, the intersection of $Y_i$ and $s'$ is $\cycle[i]$. Thus the inclusion $Y_i\to Y_{i+1}$ is the pushout of $S_{i+1}$ copies of $\cycle[i]\to\simplex[i]$.
\end{proof}

\begin{definition} Let $\cat A$ be a $\Pi$-pretopos with a natural number object $\nno$. The successor morphism $s\of\nno\to\nno$ induces an endofunctor $s\ri$ of $\cat A/\nno$. A \keyword{cochain} is a morphism $f\of X\to s\ri(X)$ in $\cat A/\nno$. A morphism of cochains $(X,f)\to(Y,g)$ is a morphism $h\of X\to Y$ such that $s\ri(h)\circ f = g\circ h$.

For every object $Y$ of $\cat A$ there is a constant family $\nno\ri(Y)$ and a canonical isomorphism $\nno\ri(Y)\simeq s\ri(\nno\ri(Y))$ which makes $\nno\ri(Y)$ a cochain. The \keyword{transfinite composition} of a cochain $(X,g)$ is a cochain morphism $\traco(g)\of (X,f)\to\nno\ri Y$ such that for every other morphism $k\of (X,f)\to\nno\ri Z$ there is a unique morphism $l\of Z\to Y$ such that $s\ri(l)\circ \traco(g)=k$.
\end{definition}

\begin{lemma} The class of morphisms $L$ that has the left lifting property with respect to fibrations is closed under pushouts, coproducts indexed over objects of $\ambient$ and transfinite compositions. \label{saturation}\end{lemma}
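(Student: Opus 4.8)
The plan is to treat the three closure properties separately; in each, the defining condition of $L$—possession of the (internal) left lifting property against every fibration—is transported along a universal property, and nothing specific to fibrations is used, so the argument in fact shows that for any fixed right class the maps with the left lifting property against it are closed under these operations. Recall that the internal lifting property of a morphism $g$ against a fibration $p\of E\to B$ amounts to a section of the pullback-hom $\langle g,p\rangle$ built from the internal homs of $\ambient\s$, which exist because $\ambient\s$ is locally Cartesian closed. The guiding observation is that these internal homs carry the colimits defining pushouts, coproducts indexed over objects of $\ambient$, and transfinite composites to limits, so that a section of $\langle g,p\rangle$ for the constituent maps induces one for the pullback-hom of the resulting map.

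For a pushout, let $g'$ be the pushout of $g\in L$ along a map $u$, and consider a lifting problem for $g'$ against a fibration $p$. Precomposing the square with the pushout square turns it into a lifting problem for $g$, which $g$ solves because it lies in $L$; the universal property of the pushout then glues this filler with the given map out of the pushout corner into a filler for $g'$, and commutativity with $p$ follows from the uniqueness part of that universal property. For a coproduct indexed over an object $I$ of $\ambient$, the internal hom turns the $I$-indexed coproduct of the domains into the $I$-indexed product, so that the pullback-hom of the coproduct map is the $I$-indexed product of the pullback-homs of the summands; here local Cartesian closedness is exactly what lets me form this indexed product and reassemble an internally chosen section over all of $I$ into a section of the product. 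Both of these cases are formal.

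The transfinite composition case is the main obstacle. Let $(C,g)$ be a cochain each of whose stage maps lies in $L$, let $t\of C_0\to Y$ be its transfinite composite, and suppose given a lifting problem for $t$ against a fibration $p$ with top map $a\of C_0\to E$ and bottom map $b\of Y\to B$. The plan is to construct a cochain morphism $\phi\of(C,g)\to\nno\ri E$ by recursion on $\nno$: set $\phi$ at stage $0$ equal to $a$, and at each successor stage solve the lifting problem for the stage map, which lies in $L$, against $p$, thereby extending $\phi$ by one step. Since each component of $\phi$ lies over the corresponding component of $b$, this family is a genuine cochain morphism into $\nno\ri E$. The defining universal property of $\traco(g)$, applied to $\phi$, then yields a unique morphism $l\of Y\to E$ with $l\circ t=a$, and $p\circ l=b$ follows from the uniqueness clause of the same universal property, because $p\circ l$ and $b$ induce the same cochain morphism into $\nno\ri B$. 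Thus $l$ is the required filler.

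The difficulty concentrated here is that the recursion must be performed internally rather than as an informal induction. Producing $\phi$ as an actual morphism of $\ambient\s$ over $\nno$ means feeding the recursion principle of the natural number object $\nno$ with the internal sections supplied, stage by stage, by the internal left lifting property, so that the whole family of partial fillers is itself a single morphism. Once $\phi$ is available as such a morphism, the universal property of transfinite composition delivers $l$ formally, and together with the pushout and coproduct cases this closes the argument. I therefore expect the bulk of the work—and the only point at which the enriched, rather than merely set-level, lifting property is essential—to be the verification that this $\nno$-indexed recursion yields a bona fide internal cochain morphism.
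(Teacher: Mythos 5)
Your proposal is correct and takes essentially the same route as the paper: both encode the enriched lifting property as a section of the pullback-hom and transport it through the three constructions, using that the internal homs turn pushouts into pullbacks, that indexed coproducts transpose to indexed products of pullback-homs, and that transfinite composites are handled via the universal property of $\traco$. Your explicit $\nno$-recursion producing the cochain morphism into $\nno\ri E$ is precisely the content that the paper compresses into its final ``therefore also split'', so it fills in rather than departs from the paper's argument.
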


\begin{proof}
In each case the construction induces an operation on the split epimorphism in the diagram of the lifting property. These operations happen to preserve split epimorphisms.

Suppose $h\of I'\to J'$ is a pushout of $g\of I\to J$ where $g$ has the left lifting property with respect to $f\of X\to Y$. Because the functors $\nat(-,X)$ and $\nat(-,Y)\of(\ambient\s)\dual\to\ambient$ send pushouts to pullbacks, $\tuplet{f_!,h\ri}$ is a pullback of $\tuplet{f_!,g\ri}$ and the former is a split epimorphism because the latter is.

Suppose $h\of(\product{i\of I}{\nat(X(i),Y(i))})$ represents a family of morphisms of that share the left lifting property. That means $\tuplet{(I\disc\ri(f))_!,h\ri}$ is a split epimorphism and therefore so is its transpose $\tuplet{f_!,\amalg_I(h)\ri}$, where $\amalg_I(h)\of\nat(\coproduct{i\of I}{X(i)},\coproduct{i\of I}{Y(i)})$ is the indexed coproduct of the family $h$.

Finally, suppose that a cochain $(Z,h)$ has the left lifting property. That means $\tuplet{\nno\disc\ri(f)_!,h\ri}$ is a split epimorphism. For the transfinite composition $\traco(h)$, $\tuplet{f_!,\traco(h)\ri}$ is therefore also split.
\end{proof}

\section{Weak Equivalences}
This section demonstrates that the weak equivalences of definition \ref{weak equivalence} satisfy the two out of three property (see definition \ref{model structure}).

\begin{lemma}[2-out-of-3] Let $f\of X\to Y$ and $g\of Y\to Z$ be morphisms of $\ambient\s\f$. If any two of $f,g$ or $g\circ f$ are weak equivalences, then all three are. \label{toot}\end{lemma}

\begin{proof} Weak equivalences are closed under composition by lemma \ref{composition of weak equivalences}.

Let $g$ and $g\circ f$ be arbitrary weak equivalences. The morphism $f$ factors as an acyclic fibration $h\of W\to Y$ following a cofibration $k\of X\to W$ by proposition \ref{factor2}. Because weak equivalences are closed under composition (see \ref{composition of weak equivalences}), $g\circ h$ is a weak equivalence. The morphism $k$ is acyclic for the following reasons. Factor both $g\circ f$ and $g\circ h$ as acyclic fibrations following acyclic cofibrations, so $g\circ f = a\circ b$ and $g\circ h = c\circ d$. The lifting properties induce a morphism $l$ such that $l\circ b = d\circ k$ and $c\circ l = a$. Lemma \ref{shared retract 2} says that $l$ is a weak equivalence because $a$ and $c$ are acyclic fibrations. 
Because of closure under composition, the morphism $l\circ b = d\circ k$ is both a weak equivalence and a cofibration and hence an acyclic cofibration. Since $d$ and $d\circ k$ are acyclic cofibrations, so is $k$ by lemma \ref{left cancellation}.
\[\xymatrix{
X\ar[r]_k\ar@/^2ex/[rr]^f\ar[d]_b & W\ar[r]_h\ar[d]^d & Y\ar[d]^g \\
\bullet\ar@/_2ex/[rr]_a\ar@{.>}[r]^l & \bullet\ar[r]^c & Z
}\]
Since $f = h\circ k$, $f$ is a weak equivalence.

The case where $f$ and $g\circ f$ are weak equivalences is dual to the case above and the reasoning is the same. Acyclic cofibrations satisfy lemma \ref{shared retract} where acyclic fibrations satisfy lemma \ref{shared retract 2}. Acyclic fibrations satisfy lemma \ref{right cancellation} where acyclic cofibrations satisfy lemma \ref{left cancellation}.

This means that weak equivalences indeed satisfy 2-out-of-3.
\end{proof}

\begin{lemma} Weak equivalences are closed under composition. \label{composition of weak equivalences}\end{lemma}

\begin{proof} Compositions of acyclic fibrations are acyclic fibrations and the same holds for acyclic cofibrations.
All compositions of weak equivalences are weak equivalences, if $g\circ f$ factors as an acyclic fibration following an acyclic cofibration for each acyclic cofibration $g$ and acyclic fibration $f$.

By proposition \ref{factor1} $g\circ f=h\circ k$ for some acyclic fibration $h\of W\to Z$ and a cofibration $k\of X\to W$. Let $g'$ be the left inverse of $g$. Since $f \circ \id = g'\circ g\circ f= (g'\circ h)\circ k$ there is a morphism $k'$ such that $f\circ k' = g'\circ h$ and $k'\circ k = \id$, so $k$ has its own left inverse.
\[\xymatrix{
X\ar[d]_f \ar[r]_{k} \ar@/^2ex/[rr]^{\id} & W\ar[d]^h \ar[r]_{k'} & X\ar[d]^f\\
Y \ar[r]^{g} \ar@/_2ex/[rr]_{\id} & Z \ar[r]^{g'} & Y
}\]

Let $\phi$ be a homotopy between $\id_Z$ and $g\circ g'$ such that $\phi\circ (\id\times g)=g\circ\pi_1$.

There is a homotopy $\chi$ between $\id_W$ and $k\circ k'$ by lemma \ref{triple lift}.
\begin{align*}
(\id, k\circ k')\circ (k+k) &= (k,k) = k\circ \pi_1\circ(c\times\id_X)\\
h\circ (k \circ \pi_1) &= (\phi\circ (\id_W\times h))\circ (\id_{\simplex[1]} \times k)\\
h\circ (\id, k\circ k') &= (\phi\circ (\id_W\times h))\circ (c\times \id_W)
\end{align*}
\[\xymatrix{
X+X\ar[d]_{c\times \id}\ar[r]^{k+k} & W+W\ar[d]_(.3){c\times \id}\ar[r]^(.6){(\id,k\circ k')} & W\ar[d]^h\\
\simplex[1]\times X\ar[r]_{\id\times k} \ar[urr]^(.3){k\circ \pi_1}  & \simplex[1]\times W\ar[r]_(.6){\phi\circ (\id\times h)} \ar@{.>}[ur]_\chi & Z
}\]
Because the homotopy satisfies $\chi\circ(\id_{\simplex[1]}\times k) = k\circ \pi_1$, $k$ is a weakly invertible cofibration by definition \ref{weakly invertible} and an acyclic cofibration by lemma \ref{acyclic have lifting}.
\end{proof}

\begin{lemma}[Triple lifting property] Let $f\of A\to B$ and $g\of C\to D$ be cofibrations and let $h\of X\to Y$ be a fibration. Let $a\of A\times D\to X$, $b\of B\times C\to X$ and $c\of B \times D\to Y$ satisfy $a\circ(\id_A\times g) = b\circ(f\times \id_C)$, $h\circ a=c\circ(f\times \id_D)$ and $h\circ b=c\circ(\id_B\times g)$. If one of $f$, $g$ or $h$ is acyclic, then there is a $d\of B\times D\to X$ such that $d\circ(f\times\id_D)=a$, $d\circ(\id_B\times g)=b$ and $h\circ d = c$.
\[\xy
(0,20)*+{A\times C}="AC",(25,20)*+{B\times C}="BC",(40,20)*+{X}="X",
(0,0)*+{A\times D}="AD",(25,0)*+{B\times D}="BD",(40,0)*+{Y}="Y"
\ar^{f\times\id} "AC";"BC"
\ar_{\id\times g} "AC";"AD"
\ar_{f\times\id} "AD";"BD"
\ar^{a} "AD";"X"
\ar|(.6){\id\times g} "BC";"BD"
\ar^(.6){b} "BC";"X"
\ar@{.>}_{d} "BD";"X"
\ar_(.6){c} "BD";"Y"
\ar^{h} "X";"Y"
\endxy\]
\label{triple lift}
\end{lemma}

\begin{proof} The general case reduces to the cases where $f$ and $g$ are cycle or horn inclusions, because the lemma is equivalent to the statement that the following maps are (acyclic) fibrations and because acyclic cofibrations are contractible (see definition \ref{contractible}).
\begin{align*}
\tuplet{f\to h}&=\tuplet{h^{\id_B},\id_X^f}\of X^B \to Y^B\times_{Y^A} X^A\\
\tuplet{g\to h}&=\tuplet{h^{\id_D},\id_X^g}\of X^D \to Y^D\times_{Y^C} X^C
\end{align*}
In the case where $h$ is an acyclic fibration the morphism $\tuplet{g\to h}$ is an acyclic fibration if is has the lifting property for the family of cycle inclusions $k_i\of \cycle[i]\to\simplex[i]$ for $i\of\nno$, because acyclic fibrations are contractible morphisms (see lemma \ref{Reedy}). In turn $\tuplet{k_n\to h}$ are acyclic cofibrations if the triple lifting property holds in the cases where $a=k_i$ and $b=k_j$ for all $i,j\of\nno$.

For the cases where $f$ or $g$ are acyclic the same reduction takes us to products of cycle and horn inclusions. If $f$ is acyclic, the problem of lifting $f$ against $g\to h$ reduces to the problem of lifting horn inclusions. The problem of lifting a possibly not acyclic $g$ against $k\to h$, where $k$ is a horn inclusion, reduces to the problem of lifting cycle inclusions by lemma \ref{Reedy}.

In the cases where $f$ and $g$ are horn or cycle inclusions it is easy to prove that their pushout products $g\pp f$ are compositions of pushouts of sums of horns and cycles and therefore have the left lifting properties with respect to (acyclic) fibrations (see lemma \ref{saturation}).
\[\xy
(0,10)*+{W\times Y}="left",(20,10)*+{\bullet}="middle",(10,20)*+{X\times Y}="top",(10,0)*+{W\times Z}="bottom",(40,10)*+{X\times Z}="right"
\ar^(.4){f\times \id} "left";"top" \ar_{\id\times g} "left";"bottom" \ar "top";"middle" \ar "bottom";"middle"
\ar@{.>}|(.3){g\pp f} "middle";"right" \ar^{\id\times g} "top";"right" \ar_{f\times\id} "bottom";"right"
\endxy\]
Therefore the lemma holds for every pair of cofibrations $f$, $g$ as long as one of $f$, $g$ and $h$ is acyclic.
\end{proof}

\begin{lemma} If $f\of X\to Y$, $g\of Y\to Z$ and if $g$ and $g\circ f$ are acyclic cofibrations, then $f$ is an acyclic cofibration \label{left cancellation} \end{lemma}

\begin{proof} Let $k\of A\to B$ be a fibration and let $a\of X\to A$ and $b\of Y\to B$ satisfy $k\circ a=b\circ f$. Because $B$ is a complex, there is a $b'\of Z\to B$ such that $b'\circ g = b$. Lifting properties also imply that there is an $a'\of Z\to A$ such that $a'\circ g\circ f = a$ and $k\circ a'= b'$. So $b'\circ g$ is a filler for $k\circ a=b\circ f$. By generalization this construction lifts $f$ against all fibrations and that makes $f$ an acyclic cofibration.
\[\xy
(0,28)*+{X}="x",(14\halfrootthree,21)*+{A}="a",(0,14)*+{Y}="y",(14\halfrootthree,7)*+{B}="b",(0,0)*+{Z}="z"
\ar^a "x";"a" \ar_f "x";"y" \ar^k "a";"b" \ar^(.25)b "y";"b" \ar_g "y";"z"
\ar@{.>}_{b'} "z";"b" \ar@{.>}^(.75){a'} "z";"a"
\endxy\]
\end{proof}

\begin{lemma} If $f\of X\to Y$, $g\of Y\to Z$ and if $f$ and $g\circ f$ are acyclic fibrations, then $g$ is an acyclic fibration. \label{right cancellation}\end{lemma}

\begin{proof} This is nearly the dual of lemma \ref{left cancellation} and dual reasoning gives $g$ the right lifting property for all cycle inclusions  which makes $g$ contractible (see definition \ref{contractible}) and an acyclic cofibration by lemma \ref{Reedy}.

\[\xy
(14\halfrootthree,28)*+{X}="x", (0,21)*+{\cycle[n]}="v", (14\halfrootthree,14)*+{Y}="y", (0,7)*+{\simplex[n]}="w", (14\halfrootthree,0)*+{Z}="z"
\ar@{.>}^{a'} "v";"x" \ar_(.75)a "v";"y" \ar_k "v";"w"
\ar@{.>}_(.25){b'} "w";"x" \ar_b "w";"z"
\ar "x";"y" \ar "y";"z"
\endxy\]

The reason the dual reasoning works is that for the cycles $\cycle[n]$ from definition \ref{contractible} the unique morphism $\bang\of 0\to \cycle[n]$ is a cofibration. Here $0=0\disc$ is the initial object of $\ambient\s$. A member of $\base(\cycle[n])$ is a face if it is an identity morphism, and this is a decidable property of morphism in $\simCat$. No faces are in the image of $\bang$.
\end{proof}

\begin{lemma} If $f\of X\to Y$, $g\of Y\to Z$ and if $g$ and $g\circ f$ are acyclic fibrations, then $f$ is a weak equivalence.\label{shared retract 2} \end{lemma}

\begin{proof} By proposition \ref{factor1}, $f$ factors as an acyclic fibration $h\of W\to Y$ following a cofibration $k\of X\to W$. Because $(g\circ f)\circ \id = (g\circ h)\circ k$ and $g\circ f$ is an acyclic fibration, $k$ has a left inverse $k'\of X\to W$ which satisfies $g\circ f\circ k' = g\circ h$. 
\[\xymatrix{
X\ar[d]_k \ar[r]^\id & X\ar[d]^{g\circ f}\\
W\ar[r]_{g\circ h} \ar@{.>}[ur]^{k'} & Z
}\]

Let $c\of 1+1\to\simplex[1]$ be the same cycle as above. There is homotopy $\phi$ between $\id_W$ and $k\circ k'$ by lemma \ref{triple lift} and the following equations.
\begin{align*}
(\id,k\circ k')\circ(k+k) &= (k,k) = (k\circ\pi_1)\circ (c\times\id_X)\\
(g\circ h)\circ (k\circ \pi_1) &= (g\circ h\circ \pi_1)\circ(\id\times k)\\
(g\circ h)\circ (\id,k\circ k') &= (g\circ h\circ \pi_1)\circ(c\times\id_W)
\end{align*}
\[\xymatrix{
X+X \ar[d]_{c\times \id}\ar[r]^{k+k} & W+W\ar[d]_(.3){c\times\id}\ar[r]^(.6){(\id,k\circ k')} & W\ar[d]^{g\circ h}\\
\simplex[1]\times X \ar[r]_{\id\times k}\ar[urr]^(.3){k\circ\pi_1}& \simplex[1]\times W\ar[r]_{g\circ h\circ \pi_1}\ar@{.>}[ur]_\phi & Z
}\]
Because $\phi\circ (\id_{\simplex[1]}\times k)=k\circ \pi_1$, definition \ref{weakly invertible} says that $k$ is a weakly invertible cofibration and lemma \ref{acyclic have lifting} says that $k$ is an acyclic cofibration. Therefore $f$ is a weak equivalence.\end{proof}

\begin{lemma} If $f\of X\to Y$, $g\of Y\to Z$ and if $f$ and $g\circ f$ are acyclic cofibrations, then $g$ is a weak equivalence.\label{shared retract} \end{lemma}

\begin{proof} \hide{The morphism $g$ factors as a fibration $h\of W\to Z$ following an acyclic cofibration $k\of Y\to W$ by lemma \ref{factor2}. Lifting properties give $h$ a right inverse $h'$.
\[\xymatrix{
X\ar[r]^{k\circ f}\ar[d]_{g\circ f} & W\ar[d]^h\\
Z\ar[r]_\id \ar[ur]^{h'} & Z
}\]
Lemma \ref{triple lift} provides a homotopy $\phi$ between $\id_W$ and $h'\circ h$, because the following equations hold.
\begin{align*}
(\id,h'\circ h)\circ(k\circ f+k\circ f)&= (k\circ f,k\circ f) = (k\circ f\circ \pi_1)\circ(c\times \id_Y)\\
h\circ(k\circ f\circ\pi_1) &= g\circ f\circ\pi_1 = (h\circ\pi_1)\circ(\id_{\simplex[1]}\times (k\circ f))\\
h\circ(\id_W,h'\circ h) &= (h,h) = (h\circ\pi_1)\circ(c\times\id_W)
\end{align*}

\[\xymatrix{
X+X\ar[r]^{k\circ f+k\circ f}\ar[d]_{c\times \id} & W+W\ar[r]^(.6){(\id,h'\circ h)}\ar[d]_(.3){c\times\id} & W\ar[d]^h\\
\simplex[1]\times X\ar[r]_{\id\times (k\circ f)}\ar[urr]^(.3){k\circ f\circ\pi_1} & \simplex[1]\times W\ar[r]_(.6){h\circ \pi_1}\ar@{.>}[ur]_{\phi} & Y
}\]}

This lemma is dual to lemma \ref{shared retract 2}. The morphism $g$ factors as a fibration $h\of W\to Z$ following an acyclic cofibration $k\of Y\to W$ by lemma \ref{factor2}. The dual of the proof of lemma \ref{shared retract 2} doesn't show that $h$ is an acyclic fibration directly, but it does show that $h$ has a right inverse $h'\of Z\to W$ and that there is a homotopy $\phi\of\simplex[1]\times W\to W$ between $\id_W$ and $h'\circ h$.

Let $a\of I\to J$ be an arbitrary cofibration and let $i\of I\to W$ and $j\of J\to Z$ satisfy $j\circ a=h\circ i$. Lemma \ref{triple lift} deforms $h'\circ j$ into a filler. Let $c_i$ be the morphisms $1\to \simplex[0]$.
\begin{align*}
\phi\circ(\id\times i)\circ (c_1\times \id) &= h'\circ h\circ i = h'\circ j\circ a \\
h\circ h' \circ j &= j = j\circ \pi_1\circ(c_1\times\id) \\
j\circ \pi_1\circ (\id\times a) &= j\circ a\circ \pi_1 = h\circ i \circ \pi_1\\
h\circ \phi\circ(\id\times i) &= h \circ \pi_1 \circ (\id\times i) = h\circ i \circ \pi_1
\end{align*}

\[\xymatrix{
I\ar[r]^{a}\ar[d]_{c_1\times \id} & J\ar[r]^(.6){h'\circ j}\ar[d]_(.3){c_1\times\id} & W\ar[d]^h\\
\simplex[1]\times I\ar[r]_{\id\times a}\ar[urr]^(.3){\phi\circ(\id\times i)} & \simplex[1]\times J\ar[r]_(.6){j\circ \pi_1}\ar@{.>}[ur]_{\psi} & Z
}\]
The filler is $\psi\circ c_0$. By generalization, $h$ is an acyclic fibration and $f$ is a weak equivalence.
\end{proof}

\section{Fibrations}
This section show that fibrations are part of a factorization system as well.

\begin{lemma} Acyclic cofibrations and fibrations form an enriched factorization system on $\ambient\s\f$. \label{factorization system 2} \end{lemma}

\begin{proof} Lemma \ref{acyclic have lifting} shows that weakly invertible cofibrations (see definition \ref{weakly invertible}) are acyclic cofibrations. Lemma \ref{lifting is acyclic} shows the converse. Proposition \ref{factor2} shows that every morphism factors as a fibration following a weakly invertible cofibration.
\end{proof}

\begin{definition} In the subcategory $\ambient\s\f$ of $\ambient\s$ a cofibration $f\of X\to Y$ is \keyword{weakly invertible} if there is a $g\of Y\to X$ such that $g\circ f = \id_X$ and an $h\of \simplex[1]\times Y\to Y$ such that $h\circ k = (\id_Y,f\circ g)$ if $k\of 1+1\to\simplex[1]$ is the cycle inclusion and 
$h\circ (\id_{\simplex[1]}\times f) = f\circ\pi_1$.
\[\xymatrix{
Y+Y\ar[d]_{k\times \id} \ar[dr]^{(\id,f\circ g)}\\
\simplex[1]\times Y \ar[r]_(.6)h & Y\\
\simplex[1]\times X \ar[r]_(.6){\pi_1}\ar[u]^{\id\times f} & X\ar[u]_f\\
}\]
\end{definition}

\begin{lemma} Weakly invertible cofibrations are acyclic cofibrations. \label{acyclic have lifting}\end{lemma}

\begin{proof} Let $f\of X\to Y$ be a weakly invertible cofibration, with inverse $g\of Y\to X$ and homotopy $h\of\simplex[1]\times Y\to Y$. Let $k\of A\to B$ be a fibration and let $a\of X\to A$ and $b\of Y\to B$ satisfy $k\circ a=b\circ f$. The composition $a\circ g\of Y\to A$ is a filler up to homotopy, which can be transported along the homotopy $h$ to become a proper filler. Let $d_0$ be the first horn inclusion $1\to \simplex[1]$. By lemma \ref{triple lift}, there is a $d\of \simplex[1]\times Y\to A$ such that $k\circ c=b\circ h$, $c\circ (\id_{\simplex[1]}\times f)= a\circ \pi_1$ and $c\circ (d_0\times \id_Y) = a\circ g$.
Lemma \ref{triple lift} applies because the following equations hold.

\begin{align*} 
	(a\circ \pi_1)\circ (\simplex(\delta^1_1)\times\id_X) &= a = (a\circ g)\circ f\\
	(b\circ h)\circ (\simplex(\delta^1_1)\times\id_Y) &= b\circ f\circ g = k\circ (a\circ g)\\
	(b\circ h)\circ (\id_{\simplex[1]}\times f) &= b\circ f\circ \pi_1 = k\circ (a\circ \pi_1)
\end{align*}
\[\xymatrix{
X\ar[d]_f \ar[rr]^(.4){d_0\times \id} && \simplex[1]\times X \ar[d]|(.6){\id\times f}\ar[r]^(.6){a\circ \pi_1} & A\ar[d]^k\\
Y \ar[rr]_(.4){d_0\times \id}\ar[urrr]^(.3){a\circ g} && \simplex[1]\times Y\ar[r]_(.6){b\circ h}\ar@{.>}[ur]_c & B
}\]

Let $d_1$ be the other horn inclusion $1\to \simplex[1]$. Let $e=c\circ(d_1\times \id_Y)$, so $k\circ e=b\circ h\circ (d_1\times \id_Y) = b$ and $e\circ f = c\circ (\id_{\simplex[1]}\times f)\circ (d_1\times \id_X) = a$. Then $e$ is a filler for $b\circ f=k\circ a$. By generalization $f$ is a weakly invertible cofibration.
\end{proof}

\begin{lemma} All acyclic cofibrations in $\ambient\s\f$ are weakly invertible.\label{lifting is acyclic} \end{lemma}

\begin{proof} An acyclic fibration is a fibration, because the right lifting property for cofibrations implies the right lifting property for horn inclusions. Therefore every acyclic cofibration is a cofibration.

Let $f\of X\to Y$ be a morphism with the left lifting property for all fibrations in $\ambient\s\f$. There is a morphism $g\of Y\to X$ such that $g\circ f = \id_X$ because $X$ is fibrant. 
\[ \xymatrix{
X\ar[d]_f \ar[r]^\id & X\ar[d]^\bang\\
Y\ar[r]_\bang \ar@{.>}[ur]_h & 1
}\]
There is a morphism $h\of \simplex[1]\times Y\to Y$ such that $h\circ k = (\id_Y,f\circ g)$ if $k\of 1+1\to\simplex[1]$ is the cycle inclusion and $h\circ (\id_{\simplex[1]}\times f) = f\circ\pi_1$ because $Y$ is a complex and because of lemma \ref{triple lift}.
\[ \xymatrix{
X+X\ar[d]_{f+f}\ar[r]^{k\times\id} & \simplex[1]\times X \ar[r]^(.6){f\circ \pi_0}\ar[d] & Y\ar[d]^\bang\\
Y+Y\ar[r]_{k\times\id} \ar[urr]^(.3){(\id,g)} & \simplex[1]\times Y \ar[r]_\bang\ar@{.>}[ur]_h & 1
}\]
Therefore $f$ is a weakly invertible cofibration by definition \ref{weakly invertible}.
\end{proof}

\begin{proposition} Every morphism $f\of X\to Y$ of $\ambient\s\f$ factors as a fibration following a weakly invertible cofibration. \label{factor2} \end{proposition}

\begin{proof} There is a simple factorization $f = p_0\circ r$ where $p_0\of Y/f\to Y$ and $r\of X\to Y/f$ and where 
\begin{align*}
Y/f &= \set{(x,p)\of X\times Y^{\simplex[1]}| f(x) = p(\function{*}{1}) }\\
p_0\tuplet{x,p} &= p(\function{*}{0})\\
r(x) &= \tuplet{x,\function{*}{f(x)}}
\end{align*}

The morphism $r$ is not necessarily a cofibration, but factors as an acyclic fibration $g\of W\to Y/f$ following a cofibration $h\of X\to W$ by proposition \ref{factor1}. The morphism $h$ is a weakly invertible cofibration and the composition $p_0\circ g$ is a fibration for the following reasons.

There is a morphism $p_1\of Y/f\to X$ that satisfies $p_1\tuplet{x,p}=x$. Let $h'=p_1\circ h$, so $h'\circ h = \id_X$.
There is a homotopy $\phi$ between $\id_{W}$ and $h\circ h'$ by lemma \ref{triple lift} and the following equations.
\begin{align*}
(\id_W,h\circ h')\circ(h+h)&=(h,h)= (h\circ\pi_1)\circ (c\times \id_X)\\
h'\circ h\circ \pi_1&=\pi_1=(h'\circ\pi_1)\circ(\id_{\simplex[1]}\times h)\\
h'\circ(\id_W,h\circ h')&=(h',h')= (h'\circ\pi_1)\circ (c\times \id_W)
\end{align*}
\[\xymatrix{
X+X\ar[d]_{c\times \id}\ar[r]^{h+h} & W+W\ar[d]_(.3){c\times \id}\ar[r]^(.6){(\id,h\circ h')} & W\ar[d]^{h'}\\
\simplex[1]\times X\ar[r]_{\id\times h} \ar[urr]^(.3){h\circ\pi_1}  & \simplex[1]\times W\ar[r]_(.6){h'\circ\pi_1} \ar@{.>}[ur]_\phi & X
}\]
Because $\phi\circ(\id_{\simplex[1]\times h}) = h\circ \pi_1$, the cofibration $h$ is weakly invertible by definition \ref{weakly invertible}. 

Since fibrations are closed under composition and since $g$ is an acyclic fibration, $p_0\circ g$ is a fibration if $p_0$ is. 

Let $d\of A\to B$ be a weakly invertible cofibration and let $a\of A\to Y/f$ and $b\of B\to Y$ satisfy $p_0\circ a = b\circ d$. There is a morphism $c_0\of B\to X$ such that $c_0\circ d = p_1\circ a$ because $X$ is fibrant. Let $e_0$ be one of the horn inclusions $1\to \simplex[1]$. There is a morphism $c_1\of\simplex[1]\times B\to Y$ such that $c_1\circ{(\id_{\simplex[1]}\times d)}$ is the transpose of $p_0\circ a$ and $c_0\circ (e_0\times \id_B) = f\circ c_1$ because $Y$ is fibrant and because of lemma \ref{triple lift}.
\[\xy
(0,20)*+{A}="AC",(25,20)*+{\simplex[1]\times A}="BC",(50,20)*+{Y}="Y",
(0,0)*+{B}="AD",(25,0)*+{\simplex[1]\times B}="BD",(50,0)*+{1}="one"
\ar^(.4){e_0\times\id} "AC";"BC"
\ar_{d} "AC";"AD"
\ar_(.4){e_0\times\id} "AD";"BD"
\ar^(.3){f\circ c_0} "AD";"Y"
\ar|(.3){\id\times d} "BC";"BD"
\ar^(.6){a^t} "BC";"Y"
\ar@{.>}_{c_1} "BD";"Y"
\ar_(.6){\bang} "BD";"one"
\ar^{\bang} "Y";"one"
\endxy\]
The transpose $c_1^t\of B\to Y^{\simplex[1]}$ of $c_1$ and $c_0$ together factor as a morphism $c\of B \to Y/f$ which is a filler for $p_0\circ a = b\circ d$. By generalization, $p_0$ has the right lifting property with respect to all acyclic cofibrations and therefore is a fibration.
\end{proof}

\section{Descent}
This purpose of this section is to replace arguments based on \emph{minimal fibrations} in simplicial homotopy theory, in particular those that are related to homotopy type theory. We assume that there is a class $M$ of \emph{modest fibrations} in $\ambient\s$ with the following properties.
\begin{itemize}
\item $M$ is closed under pullbacks along arbitrary morphisms.
\item $M$ is closed under composition.
\item $M$ is closed under fibred exponentiation. This means that if $f:X\to Z$ belongs to $M$ and $g:Y\to Z$ is an arbitrary morphism, the fibred exponential $f^g_Z\of(\product{z\of Z}{X_z^{Y_z}})\to Z$ is in $M$ too.

This requirement may give us more limits than we need, but the intended model of modest morphisms in $\Asm\ex$ satisfies it.

\item $M$ contains all \emph{regular} monomorphisms--a monomorphism is \emph{regular} if it is an equalizer.
\end{itemize} 

The interesting case is where $M$ has the following structure in addition to the properties above.

\begin{definition} A \keyword{universal modest fibration} is a modest fibration $\uni\of U\to V$ such that every modest fibration $f\of X\to Y$ is the pullback of $\uni$ along a unique morphism $\chi(f)\of Y\to V$.
\end{definition}

The following fact makes the universal modest fibration a potential model of homotopy type theory.

\begin{theorem} If $\uni\of U\to V$ is a universal modest fibration, then $V$ is fibrant. \label{fibrant universe} \end{theorem}

\begin{proof} For each horn inclusion $h\of \horn_k[n]\to\simplex[n]$ and each pair of $v\of \horn_k[n]\to V$, the modest fibration $v\ri(\uni)\of \horn_k[n]\times_V U \to \horn_k[n]$ descends along $h$ to form a modest fibration $D(v\ri(\uni))\of \bullet\to\simplex[n]$ by lemma \ref{descent}. There is a $\chi(D(v\ri(\uni)))\of\simplex[n]\to V$ and $\chi(h_*(v\ri(\uni)))\circ h$ equals $v$ by definition \ref{universal modest fibration}.
\end{proof}

\begin{lemma}[Descent]
For each horn inclusion $h\of \horn_k[n]\to\simplex$ and each fibration $f\of X\to \horn_k[n]$ there is a fibration $Df\of Y\to\simplex[n]$ such that $f$ is the pullback of $Df$ along $h$.
\label{descent}
\end{lemma}

\begin{proof} There is no horn inclusion for $n=0$. In the case $n=1$, there are two maps $1\to\simplex[1]$. In both case $Df=\pi_0\of \simplex[1]\times X \to\simplex[1]$ suffices. For all the cases were $n>0$ we use the following construction.

The functor $D$ is defined (see definition \ref{descent functor}) to have the following relation with a functor $K\of\simCat/[n]\to\ambient\s/\horn_k[n]$.
\[ \ambient\s/\simplex[n](\simplex(\xi), Df)\simeq \ambient\s/\horn_k[n](K(\xi), f) \]
This is possible because $\ambient\s/\simplex[n]$ is equivalent to the category of presheaves over $\simCat/[n]$ and the Yoneda lemma applies to those presheaves. 
The relation of $D$ and $K$ allows us to reduce the problem of lifting a horn against $Df$ to the problem of lifting some finite colimit of objects in the image of $K$ against $f$. These finite colimits have the required left lifting property by lemma \ref{left lifting property}.
\end{proof}

The rest of this section works out the definition and the properties of $K$.

\begin{definition}
The following defines the functor $K\of\simCat/[n]\to\ambient\s/\horn_k[n]$.
\begin{enumerate}
\item A function $\xi\of[m]\to[n]$ cuts $[m]$ into $n+1$ posets $\xi_j = \set{i\of[m]|\xi(i)=j}$. 
\item Let $\norm \xi$ be the number of elements of the product $\product{i\of ([n]-\set k)}{\xi_i}$. 
\item Define $K_0(\xi)\of [m+\norm\xi]\to [n]$ as follows.
\[ 
	K_0(\xi)(i) = \left\{
		\begin{array}{cc}
			\xi(i) & \xi(i)<k \\
			k & \xi(i-\norm\xi)\leq k \leq \xi(i)\\
			\xi(i-\norm\xi) & k<\xi(i-\norm\xi)
		\end{array}
	\right.
\]

\item In the \keyword{lexicographical product} $\product{i\of([n]-\set k)}{\xi_i}$ tuples get the \keyword{lexicographical ordering}. This ordering determines priority by comparing elements in sequence.
\begin{align*} 
(x_0,x_1,\dotsc) \leq_{\rm lex} (x_0,x_1,\dotsc) \iff& x_0\leq y_0 \land (x_0=y_0 \to (x_1,\dotsc)\leq_{\rm lex}(y_1,\dotsc))
\end{align*}

\item Let $\ka(\xi)\of\product{i\of([n]-\set k)}{\xi_i} \to [m+\norm\xi]$ be the \emph{nondecreasing} injection $\ka$ which sends the lexicographical product $\product{i\of([n]-\set k)}{\xi_i}$ to the interval in $[m+\norm\xi]$ which starts at the least $i$ such that $K_0(\xi)(i)=k$.

\item Let $\la(\xi)\of[m]\to[m+\norm\xi]$ be the unique nondecreasing injection which skips the image of $\ka$. This means $\la(i)=i$ if $\xi(i)<k$ and $\la(i)=i+\norm\xi$ if $\xi(i)\geq k$. Moreover $K_0(\xi)\circ\la(\xi) = \xi$.
\item For each morphism $\phi\of\xi\to\xi'$ in $\simCat/[n]$ let $\phi_i\of\xi_i\to\xi'_i$ be the fibrewise morphism and let $\product{i\of([n]-\set k)}{\phi_i}$ be the corresponding map of the (lexicographical) products $\product{i\of([n]-\set k)}{\xi_i}\to\product{i\of([n]-\set k)}{\xi'_i}$.
\item Let $K_1(\phi)\of K_0(\xi)\to K_0(\xi')$ be the unique nondecreasing function which satisfies $K_1(\phi)\circ \ka(\xi) = \ka(\xi')\circ \product{i\of([n]-\set k)}{\phi_i}$ and $K_1(\phi)\circ \la(\xi) = \la(\xi')\circ \phi$.
\end{enumerate}

The maps $(K_0,K_1)$ define an endofunctor of $\simCat/[n]$. To get the functor, let $K(\xi) = h\ri(\simplex(K_0(\xi)))$ for objects $\xi$ of $\simCat/[n]$ and $K(\phi) = h\ri(\simplex(K_1(\phi)))$ for morphisms, where $h\ri$ is the reindexing functor $\ambient\s/\simplex[n]\to\ambient\s/\horn_k[n]$ along the horn inclusion $h\of \horn_k[n]\to\simplex[n]$. Concretely, $K$ satisfies the following equations for all objects $\xi$ of $\simCat/[n]$, all $x\of \base(\dom(K(\xi)))$ and all arrows $\phi$ of $\simCat/[n]$.
\begin{align*}
\base(\dom(K(\xi))) &= \set{ x\of\base\simplex[m+\norm\xi]| K_0(\xi)\circ x\of\base\horn_k[n]}\\
K(\xi)(x) &= K_0(\xi)\circ x \\
\dim(x) &= \dom(x) \\
x\cdot\phi &= x\circ\phi \\
K(\phi)(x) &= K_1(\phi)\circ x
\end{align*}
\end{definition}

Thanks to the following property, the natural equivalence of homsets above extends to horn inclusions.

\begin{lemma} An \keyword{intersection} is a pullback square of monomorphisms. The functor $K$ preserves all intersections of $\simCat/[n]$. \end{lemma}

\begin{proof} The easiest way to see that is to describe the monomorphisms in the intersection as decidable predicates on the domain of a map $\xi\of[m]\to[n]$. Maps $[m]\to\bool$ that equals $\true$ at least once characterize monomorphisms to $\xi$. The action of the endofunctor $(K_0,K_1)$ on the monomorphism has a parallel on predicates. For each $p\of[m]\to\bool$ the following equations define $p^K\of [m+\norm\xi]\to\bool$. 
\begin{align*}
p^K(\kappa(\vec x)) &= \forall i\of[m]-\set k.p(x_i) &
p^K(\lambda(x)) &= p(x)
\end{align*}
It all works out because $\forall i\of[m]-\set k.p(x_i)\land q(x_i)$ is equivalent to $(\forall i\of[m]-\set k.p(x_i))\land(\forall i\of[m]-\set k.q(x_i))$.
\end{proof}

\begin{definition}
For each $f\of X\to\horn_k[n]$ the functor $D\of \ambient\s/\horn_k[n]\to\ambient\s/\simplex[n]$ satisfies the following equations.
\begin{align*}
\base(\dom(D(f))) &= \sum_{\xi\of\base\simplex[n]}(\ambient\s/\horn_k[n])(K(\xi),f)\\
\dim(\xi,x) &= \dom\xi\\
(\xi,x)\cdot\phi &= (\xi\circ\phi,x\circ K(\phi))\\
D(f)(\xi,x) &= \xi
\end{align*}
On morphisms in $\ambient\s/\horn_k[n]$, the functor $D$ acts by composition: $Dm(\xi,x) = (\xi,m\circ x)$. \label{descent functor}
\end{definition}

If $\ambient$ has infinite colimits, the functor $D$ has a left adjoint $K'$ that satisfies $K'\simplex\simeq K$. We cannot rely on that property here. The functor $K$ does extend to finite colimits of simplices. Preservation of intersections implies that $K$ also applies to finite unions of simplices like $\horn_l[m]$. The reason $D$ preserves fibrations and their right lifting property, is that its partial left adjoint preserves the left lifting property. 

\begin{definition}[Face notation] We introduce a notation for monomorphisms in $\simCat/[n]$ or more accurately for the result of applying $\simplex$ to them. For $\xi\of[m]\to[n]$ and $\phi\of[m]\to\bool$ let $\face(\xi,p)\of \dom(\face(\xi,p))\to \simplex[m]$ satisfy
\begin{align*}
\base(\dom(\face(\xi,p))) &= \set{\alpha\of\base(\simplex[m])\middle|\forall i\of \dom(\alpha).p\circ\alpha(i)=\true}\\
\dim(\alpha) &= \dom(\alpha)\\
\alpha\cdot\chi &= \alpha\cdot\chi\\
\face(\xi,p)(\alpha) &= \alpha
\end{align*}
\[\xymatrix{
\bullet\ar[rr]^{\face(\xi,p)}\ar[dr] && \simplex[m]\ar[dl]^{\simplex(\xi)} \\
& \simplex[n]
}\]
\end{definition}

With the help of this notation, we describe commutative triangles of monomorphisms as follows.
\begin{align}
&\horn_l[m]\stackrel{\hat h}\to \simplex[m]\stackrel{\simplex(\xi)}\to \simplex[n] = \bigvee_{i\of [m]-\set l} \face(\xi,\function x{x\neq i})\\
&K(\horn_l[m])\stackrel{K(\hat h)}\to \simplex[m+\norm\xi]\stackrel{\simplex(K_0(\xi))}\to \simplex[n] = \bigvee_{j\of [m]-\set l} \face(K_0(\xi),v_j)\\
&h\ri(K(\horn_l[m]))\stackrel{h\ri(K(\hat h))}\to h\ri(\simplex[m+\norm\xi])\stackrel{h\ri(\simplex(K_0(\xi)))}\to \horn_l[n] = \\
&\quad\left(\bigvee_{i\of[n]-\set k} \face(K_0(\xi),u_i)\right)\wedge\left(\bigvee_{j\of [m]-\set l} \face(K_0(\xi),v_j)\right)=\\
&\quad\left(\bigvee_{\substack{i\of[n]-\set k\\j\of [m]-\set l}}\face(K_0(\xi),u_i\land v_j)\right)\\
&h\ri(\simplex[m+\norm\xi])\stackrel{h}\to \simplex[m+\norm\xi]\stackrel{K_0(\xi)}\to \simplex[n] =\left(\bigvee_{i\of[n]-\set k} \face(K_0(\xi),u_i)\right)
\end{align}
\begin{center}
where
\end{center}
\begin{align*}
u_i(x) &= (K_0\xi(x)\neq i) & 
v_i(\kappa(\xi)(\vec x)) &= (\xi(i)= k\vee x_{\xi(i)}\neq i)\\
v_i(\lambda(\xi)(x)) &= (x\neq i)&
u_i\land v_i(x) &= u_i(x)\land v_i(x)
\end{align*}

\begin{lemma} The monomorphism $h\ri K(\hat h)\of$
\[ \left(\bigvee_{\substack{i\of[n]-\set{k}\\j\of [m]-\set l}}\face(K_0(\xi),u_i\land v_j)\right)\to\left(\bigvee_{i\of[n]-\set{k}} \face(K_0(\xi),u_i)\right) \] 
has the left lifting property with respect to all fibrations.\label{left lifting property}\end{lemma}

\begin{proof}The class of morphisms with the left lifting property is closed under compositions and pushouts. The monomorphism $h\ri K(\hat h)$ belongs to this class because of this closure property.

Decompose $h\ri K(\hat h)$ as the inclusions of the following three subobjects.
\begin{align*}
A &=\dom(h\ri K(\hat h))=\left(\bigvee_{\substack{i\of[n]-\set{k}\\j\of [m]-\set l}}\face(K_0(\xi),u_i\land v_j)\right)\\
B &=\left(\bigvee_{j\of[m]-\set l}\face(K_0(\xi),u_{\xi(l)}\land v_j)\right)\vee \left(\bigvee_{i\of[n]-\set{k,\xi(l)}}\face(K_0(\xi),u_i)\right)\\
C &=\cod(h\ri K(\hat h))=\left(\bigvee_{i\of[n]-\set{k}} \face(K_0(\xi),u_i)\right)
\end{align*}

Decompose $A\to B$ as a series of inclusions $A_j\to A_j+1$ that satisfy $A_0=A$ and $A_{n+1}=B$. For $j\of[n+1]$ define $A_j$ as follows.
\[ A_j = A\vee \left(\bigvee_{i\of[j]-\set{k,j,\xi(l)}}\face(K_0(\xi),u_i)\right) \]
Each inclusion $A_j\to A_{j+1}$ is a pushout of the inclusion of $A_j\cap \face(K_0(\xi),u_{j+1})$ into $\face(K_0(\xi),u_{j+1})$, where
\begin{align*} A_j\cap \face(K_0(\xi),u_{j+1}) = &\left(\bigvee_{i\of[m]-\set l}\face(K_0(\xi),u_{j+1}\land v_{i})\right)\vee \\
&\left(\bigvee_{i\of[j+1]-\set{k,\xi(l)}}\face(K_0(\xi),u_i\land u_{j+1})\right)\end{align*}
The faces $\face(K_0(\xi),u_{j+1}\land v_{i})$ and $\face(K_0(\xi),u_i\land u_{j+1})$ all contain the point $\lambda(l)$ of $\simplex[m+\norm\xi]$ which means that lemma \ref{face completion} applies to the inclusion $A_j\land \face(K_0(\xi),u_{j+1}) \to \face(K_0(\xi),u_{j+1})$. Hence $A\to B$ has the left lifting property.

If $\xi(l)=k$, then $B=C$ and the proof is complete. Suppose $\xi(l)=i\neq k$. First define the following maps $[m+\norm\xi]\to\bool$ for each $\vec p \of\product{i\of [n]-\set k}{\xi_i}$.
\begin{align*}
w_{\vec p}(\kappa(\vec q)) &= \true &
w_{\vec p}(\lambda(q)) &= ((\xi(q)=k\vee q\leq p_{\xi(q)})\land\xi(q)\neq \xi(l))
\end{align*}

Decompose $B\to C$ as a series of inclusions $B_j\to B_j+1$ that satisfy $B_0=B$ and $B_{m+\norm\xi+1}=C$. For $j\of[m+\norm\xi+1]$ define $B_j$ as follows.
\[ B_j = B \vee \left(\bigvee_{\kappa(\vec p)<j} \face(K_0(\xi),u_{\xi(l)}\land w_{\vec p})\right)\]

When $j\neq\kappa(\vec p)$ for any $\vec p \of \product{i\of [n]-\set k}{\xi_i}$, the inclusion $B_j\to B_{j+1}$ is the identity arrow, which satsifies the left lifting property trivially. 

Suppose $j\neq\kappa(\vec p)$. Each inclusion $B_j\to B_{j+1}$ is a pushout of the inclusion of $B_j\cap \face(K_0(\xi),u_{\xi(l)}\land w_{\vec p})$ into $\face(K_0(\xi),u_{\xi(l)}\land w_{\vec p})$, where
\begin{align*} B_j\cap \face(K_0(\xi),w_{\vec p}) = 
&\bigvee_{i\of [n]-\set{\xi(l),k}}\face(K_0\xi,u_i\land w_{\vec p})\vee\\
&\bigvee_{i\of [m]-\set l} \face(K_0\xi,v_i\land w_{\vec p})\vee\\
&\bigvee_{\kappa(\vec q)<j} \face(K_0\xi,w_{\vec q}\land w_j)
\end{align*}
Let $\vec p[l]\of \product{i\of [n]-\set k}{\xi_i}$ satisfy $p[l]_j = l$ if $\xi(l)=j$ and $p[l]_i=p_i$ otherwise. The point $\kappa(p[l])$ is a member of $\face(K_0\xi,u_i\land w_{\vec p})$ for all $i\of [n]-\set{\xi(l),k}$. It is also a member of $\face(K_0\xi,w_{\vec q}\land w_{\vec p})$ for all $\vec q\of \product{i\of [n]-\set k}{\xi_i}$ such that $\kappa(\vec q)<\kappa(\vec p)$. When $\kappa(\vec p[l])$ is not a member of $\face(K_0\xi,v_i\land w_{\vec p})$ for some $i$, then $\face(K_0\xi,v_i\land w_{\vec p})$ is a subobject of a face of $B_j\cap \face(K_0(\xi),\land w_{\vec p})$ that does contain $\kappa(\vec p[l])$.

Let $i\of[m]-\set l$. If not $v_i(\kappa(\vec p[l]))$ then by definition $\xi(i)\neq k$ and $(p[l]_\xi(i) = i)$. If $i$ is the least member of $\xi_{\xi(i)}$, then $\face(K_0\xi,v_i\land w_{\vec p})\subseteq \face(K_0\xi,u_{\xi(i)}\land w_{\vec p})$. If $\xi(i-1)=\xi(i)$, then $\kappa(\vec p[i-1])<\kappa(\vec p)$ and $\face(K_0\xi,v_i\land w_{\vec p})\subseteq \face(K_0\xi,W_{\vec p[i-1]}\land w_{\vec p})$.

Since $B_j\cap \face(K_0(\xi),\land w_{\vec p})$ is a unions of faces that have the point $\vec p[l]$ in common, lemma \ref{face completion} applies to the inclusion $B_j\cap \face(K_0(\xi),\land w_{\vec p})\to \face(K_0(\xi),\land w_{\vec p})$. Therefore $B_j\to B_{j+1}$, $B\to C$ and $h\ri K(\hat h)\of A\to C$ all have the left lifting property.
\end{proof}

\begin{lemma}[Face completion] Let $F$ be an inhabited decidable set of faces of $\simplex[p]$ which all have a point $e$ of $\simplex[p]$ in common. The inclusion $\bigcup F\to \simplex[p]$ has the left lifting property with respect to all fibrations. \label{face completion} \end{lemma}

\begin{proof} For all $j\of[p]$ let $F_j$ be the union of $F$ with the set of $j$-dimensional faces of $\simplex[p]$ which contain the edge $e$. Because $F$ is inhabited, $\bigcup F$ contains $e$ and therefore $F_0=F$. Because $\simplex[p]$ is a $p$-dimensional face of $\simplex[p]$ which contains $e$, $\bigcup F_p = \simplex[p]$. For $j>0$ let $S_j$ be the set of $j$-dimensional faces of $\bigcup F_j$ which are not already contained in $\bigcup F_{j-1}$. If a $j$-dimensional face $\face(\Sigma)$ of $\bigcup F_j$ opposes $e$, it is part of a higher dimensional face which is a member of $F$. Therefore each face $\face(\Sigma)\of S_j$ contains $e$. For this reason $\face(\Sigma)\cap \bigcup F_{j-1}$ is the horn whose central edge is $e$. The inclusion $\bigcup F_{j-1}\to\bigcup F_j$ is therefore the pushout of a coproduct of horn inclusions indexed over $S_j$. Therefore the inclusion has the left lifting property. Because composition preserves the left lifting property, $\bigcup F = F_0\to F_p = \simplex[p]$ has it too. 
\end{proof}

\printbibliography

\end{document}